\newlength{\hchng}
\newlength{\vchng}
\def\1{\raisebox{2pt}{\rm{$\chi$}}}
\newtheorem{theorem}{Theorem}
\newtheorem{corollary}[theorem]{Corollary}
\newtheorem{lemma}[theorem]{Lemma}
\newtheorem{definition}[theorem]{Definition}
\newtheorem{remark}[theorem]{Remark}
\newcommand{\R}{{\mathbb R}}
\newcommand{\E}{{\mathbb E}}
\newcommand{\PP}{{\mathbb P}}
 \newcommand{\eps}{{\varepsilon}}
 \def\1{\raisebox{2pt}{\rm{$\chi$}}}
\def\vint_#1{\mathchoice%
          {\mathop{\kern 0.2em\vrule width 0.6em height 0.69678ex depth -0.58065ex
                  \kern -0.8em \intop}\nolimits_{\kern -0.4em#1}}%
          {\mathop{\kern 0.1em\vrule width 0.5em height 0.69678ex depth -0.60387ex
                  \kern -0.6em \intop}\nolimits_{#1}}%
          {\mathop{\kern 0.1em\vrule width 0.5em height 0.69678ex
              depth -0.60387ex
                  \kern -0.6em \intop}\nolimits_{#1}}%
          {\mathop{\kern 0.1em\vrule width 0.5em height 0.69678ex depth -0.60387ex
                  \kern -0.6em \intop}\nolimits_{#1}}}
\def\vintslides_#1{\mathchoice%
          {\mathop{\kern 0.1em\vrule width 0.5em height 0.697ex depth -0.581ex
                  \kern -0.6em \intop}\nolimits_{\kern -0.4em#1}}%
          {\mathop{\kern 0.1em\vrule width 0.3em height 0.697ex depth -0.604ex
                  \kern -0.4em \intop}\nolimits_{#1}}%
          {\mathop{\kern 0.1em\vrule width 0.3em height 0.697ex depth -0.604ex
                  \kern -0.4em \intop}\nolimits_{#1}}%
          {\mathop{\kern 0.1em\vrule width 0.3em height 0.697ex depth -0.604ex
                  \kern -0.4em \intop}\nolimits_{#1}}}
\newcommand{\aveint}[2]{\mathchoice%
          {\mathop{\kern 0.2em\vrule width 0.6em height 0.69678ex depth -0.58065ex
                  \kern -0.8em \intop}\nolimits_{\kern -0.45em#1}^{#2}}%
          {\mathop{\kern 0.1em\vrule width 0.5em height 0.69678ex depth -0.60387ex
                  \kern -0.6em \intop}\nolimits_{#1}^{#2}}%
          {\mathop{\kern 0.1em\vrule width 0.5em height 0.69678ex depth -0.60387ex
                  \kern -0.6em \intop}\nolimits_{#1}^{#2}}%
          {\mathop{\kern 0.1em\vrule width 0.5em height 0.69678ex depth -0.60387ex
                  \kern -0.6em \intop}\nolimits_{#1}^{#2}}}
\newcommand{\ud}{\, d}
\newcommand{\abs}[1]{\left| #1 \right|}
\newcommand{\ol}{\overline}
\newcommand{\dist}{\operatorname{dist}}
\newcommand{\I}{\textrm{I}}
\begin{document}

\title[Games for Pucci's maximal operators]
{\bf Games for Pucci's maximal operators}

\author[P. Blanc, J. J. Manfredi and J. D. Rossi]{Pablo Blanc, Juan J. Manfredi,  and Julio D. Rossi}

\date{}

\begin{abstract} In this paper we introduce a game whose value functions converge
(as a parameter that measures the size of the steps goes to zero) uniformly to solutions to
the second order Pucci maximal operators.
\end{abstract}

\maketitle


\section{Introduction}

Our main goal in this paper is to describe a game whose
values approximate viscosity solutions to the maximal Pucci problem
\begin{equation}\label{1.1}
\left\{
\begin{array}{ll}
\displaystyle
P_{\lambda, \Lambda}^+ (D^2 u) := \Lambda \sum_{\lambda_j>0} \lambda_j
+\lambda \sum_{\lambda_j<0} \lambda_j  = f , \qquad & \mbox{ in } \Omega, \\[10pt]
u=g , \qquad & \mbox{ on } \partial \Omega.
\end{array}
\right.
\end{equation}
Here $D^2 u$ is the Hessian matrix and  $\lambda_i(D^2 u)$ denote its eigenvalues.
The function $f$ is assumed to be uniformly continuous.
We assume that the ellipticity constants verify $\Lambda>\lambda>0$.

Let us describe the game that we propose to approximate solutions to \eqref{1.1}. 
It is a single player game (that tries to maximize the expected outcome). 
It  can also  be viewed as a optimal control problem.
Fix a bounded domain $\Omega \subset \mathbb{R}^N$ that satisfies a \textit{uniform exterior sphere condition}. Fix 
a \textit{running payoff function} $f :\Omega \mapsto \mathbb{R}$ 
and a \textit{final payoff function} $g :\mathbb{R}^N \setminus \Omega \mapsto \mathbb{R}$. \par
The rules of the game are as follows:
a token is placed  at an initial position $x_0 \in \Omega$, the player
chooses an orthonormal basis of ${\mathbb{R}^N}$, $v_1,...,v_N$ and then,  for each $v_i$, he
chooses either $\mu_i = \sqrt{\lambda}$ or $\mu_i = \sqrt{\Lambda}$. Then the position of the token
is moved to $x\pm \eps  \mu_i v_i$ with equal probabilities $\frac{1}{2N}$.
The game continues until the position of the token leaves the domain and at this 
point $x_\tau$ the payoff is given by $g(x_\tau)-\frac{1}{2N}\eps^2\sum_{k=0}^{\tau-1}f(x_k).$
For a given strategy $S_I$ (the player choses an orthonormal basis and the set
of corresponding $\mu_i$ at every step of the game) 
we  compute the expected outcome as
$$
\mathbb{E}_{S_I}^{x_0} \left[g(x_\tau)-\frac{1}{2N}\eps^2\sum_{k=0}^{\tau-1}f(x_k)\right].
$$
Then the value of the game for any $x_0 \in \Omega$ 
is defined as
$$
u^\eps (x_0) := \sup_{S_{I}} \mathbb{E}_{S_I}^{x_0} \left[g(x_\tau)-\frac{1}{2N}\eps^2\sum_{k=0}^{\tau-1}f(x_k)\right].
$$

Our first result states that the value of this game verifies a Dynamic Programming Principle.

\begin{theorem} \label{teo.dpp}
The value of the game 
$
u^\eps$ verifies 
\begin{equation}\label{eq.DPP}
\left\{
\begin{array}{ll}
\displaystyle u^\eps (x) =
-\frac{1}{2N}\eps^2f(x)
+ \frac{1}{2N} \sup_{v_i,\mu_i}
\sum_{i=1}^N u^\eps (x + \eps \mu_i v_i) + u^\eps (x - \eps \mu_i v_i)
& x \in \Omega, \\[10pt]
u^\eps (x) = g(x)  & x \not\in \Omega.
\end{array}
\right.
\end{equation}
\end{theorem}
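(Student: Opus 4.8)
The plan is to establish the two inequalities in \eqref{eq.DPP} separately; the boundary identity $u^\eps(x)=g(x)$ for $x\notin\Omega$ is immediate from the definition, since then $\tau=0$ and the only outcome is $g(x_0)=g(x)$. Before that I would verify that every expectation in sight is finite. Computing the one–step conditional expectation of $|x_{k+1}|^2$ while the game is running, and using that the chosen basis is orthonormal and $\mu_i^2\ge\lambda$, one gets
\[
\mathbb{E}\big[\,|x_{k+1}|^2 \mid x_0,\dots,x_k\,\big] \;=\; |x_k|^2 + \frac{\eps^2}{N}\sum_{i=1}^N \mu_i^2 \;\ge\; |x_k|^2 + \eps^2\lambda .
\]
Hence $|x_{k\wedge\tau}|^2-(k\wedge\tau)\eps^2\lambda$ is a submartingale, and since $|x_{k\wedge\tau}|$ stays bounded by a constant $R'$ depending only on $\Omega$ and $\eps\sqrt\Lambda$ (the token can exit $\Omega$ by at most one step), we obtain $\eps^2\lambda\,\mathbb{E}^{x_0}_{S_I}[k\wedge\tau]\le R'^2$, and letting $k\to\infty$, $\mathbb{E}^{x_0}_{S_I}[\tau]\le R'^2/(\eps^2\lambda)$, uniformly in $S_I$. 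As $f$ is bounded on $\Omega$ and $g$ is bounded on the compact set where the token can exit, the expected outcome is well defined and $u^\eps$ is bounded.

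\emph{Upper bound.} Fix $x\in\Omega$ and an arbitrary strategy $S_I$. Its first move is a deterministic choice (given $x$) of an orthonormal basis $v_1,\dots,v_N$ and of $\mu_1,\dots,\mu_N$, after which the token jumps to one of the $2N$ points $x\pm\eps\mu_iv_i$ with probability $\tfrac1{2N}$ each. Conditioning on this first step and using that the tail of $S_I$ after landing at a point $y$ is an admissible strategy started at $y$, the tower property gives
\[
\mathbb{E}^{x}_{S_I}\!\Big[g(x_\tau) - \tfrac{1}{2N}\eps^2\!\!\sum_{k=0}^{\tau-1}\! f(x_k)\Big]
= -\tfrac{1}{2N}\eps^2 f(x) + \tfrac{1}{2N}\sum_{i=1}^N \Big(\mathbb{E}_{\mathrm{tail}}^{\,x+\eps\mu_iv_i}[\cdots] + \mathbb{E}_{\mathrm{tail}}^{\,x-\eps\mu_iv_i}[\cdots]\Big),
\]
and each $\mathbb{E}_{\mathrm{tail}}^{\,y}[\cdots]\le u^\eps(y)$. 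Thus the expected outcome of $S_I$ is bounded above by the right-hand side of \eqref{eq.DPP}; taking the supremum over $S_I$ gives the inequality.

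\emph{Lower bound.} Fix $x\in\Omega$ and $\delta>0$. For every $y\in\mathbb{R}^N$ pick a strategy $S_I^y$ with $\mathbb{E}^{y}_{S_I^y}[\cdots]\ge u^\eps(y)-\delta$. Now fix any admissible first move $v_1,\dots,v_N,\mu_1,\dots,\mu_N$ and let $S_I$ be the strategy that plays exactly this move at the first step and then, according to which of the $2N$ points $y=x\pm\eps\mu_iv_i$ was reached (a finite choice, so no measurable selection is needed), follows $S_I^{y}$. Computing as before,
\[
u^\eps(x) \;\ge\; \mathbb{E}^{x}_{S_I}[\cdots] \;\ge\; -\tfrac{1}{2N}\eps^2 f(x) + \tfrac{1}{2N}\sum_{i=1}^N \big(u^\eps(x+\eps\mu_iv_i) + u^\eps(x-\eps\mu_iv_i)\big) - \delta .
\]
Taking the supremum over all first moves $v_i,\mu_i$ and then letting $\delta\to0$ yields the reverse inequality, and \eqref{eq.DPP} follows.

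\emph{Main difficulty.} The argument is the standard two–inequality proof of a dynamic programming principle, so the only genuinely delicate points are the probabilistic bookkeeping: showing $\tau<\infty$ a.s. so that $\sum_{k=0}^{\tau-1}f(x_k)$ and the supremum defining $u^\eps$ make sense, and rigorously justifying both the decomposition of the expected outcome over the first step and the passage between a strategy and its tail (and, in the lower bound, between a family of tail strategies and the strategy obtained by prepending a fixed first move). I expect writing these steps in the language of filtrations and stopping times to be the most technical part, while the inequalities themselves are routine.
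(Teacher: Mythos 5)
Your argument is correct, but it takes a genuinely different route from the paper. You prove the DPP directly from the definition of $u^\eps$ by one-step conditioning: decompose the payoff over the first move, bound the tail expectations by $u^\eps$ at the $2N$ landing points for the upper inequality, and glue $\delta$-optimal continuation strategies onto a fixed first move for the lower one. The paper instead first invokes an abstract existence result (Perron's method, via the cited work of Liu and Schikorra) to produce a function $\tilde u^\eps$ solving the functional equation \eqref{eq.DPP}, and then identifies $\tilde u^\eps$ with the game value by martingale arguments: for an $\eta 2^{-(n+1)}$-optimizing strategy the process $\tilde u^\eps(x_n)-\eta 2^{-n}-\frac{1}{2N}\eps^2\sum_{k=0}^{n-1}f(x_k)$ is a submartingale, for an arbitrary strategy the corresponding process without the $\eta$-correction is a supermartingale, and optional stopping (using the exit-time bound you also derive) gives the two inequalities $u^\eps\ge\tilde u^\eps$ and $u^\eps\le\tilde u^\eps$. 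The trade-off is exactly the one you flag at the end: your direct approach is self-contained and elementary, but its burden is the rigorous justification of the first-step decomposition and of the admissibility of tail and glued strategies under the Kolmogorov-extension construction of $\mathbb{P}^{x_0}_{S_I}$; for a single player with only $2N$ deterministic landing points per step this is entirely manageable, as you note. The paper's approach outsources the existence of a solution to the DPP to an external result and avoids conditioning on the first step altogether, and as a byproduct its supermartingale/submartingale argument shows that any bounded solution of \eqref{eq.DPP} equals the game value, i.e., uniqueness for the discrete equation. Your preliminary submartingale bound $\eps^2\lambda\,\mathbb{E}[\tau]\le C(\Omega)$ is the same as the paper's Lemma \ref{martingale1} and the estimate \eqref{taubound} (which the paper labels a supermartingale although the displayed inequality is the submartingale one you state).
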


Our next goal is to look for the limit as $\eps \to 0$. 

\begin{theorem} \label{teo.converge}
Let $u^\eps$ be the values of the game. Then,
\begin{equation}\label{eq.converge}
u^\eps \to u, \qquad \mbox{ as } \eps \to 0,
\end{equation}
uniformly in $\overline{\Omega}$. The limit $u$ is the unique viscosity
solution to
\begin{equation}\label{1.1.teo}
\left\{
\begin{array}{ll}
\displaystyle
P_{\lambda, \Lambda}^+ (D^2 u) := \Lambda \sum_{\lambda_j>0} \lambda_j
+\lambda \sum_{\lambda_j<0} \lambda_j  = f , \qquad & \mbox{ in } \Omega, \\[10pt]
u=g , \qquad & \mbox{ on } \partial \Omega.
\end{array}
\right.
\end{equation}
\end{theorem}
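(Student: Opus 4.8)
The plan is to follow the standard route for game-theoretic approximations of elliptic equations, using the Dynamic Programming Principle \eqref{eq.DPP} as the only information about $u^\eps$. There are three steps: (a) show that the family $\{u^\eps\}_{\eps>0}$ is uniformly bounded and asymptotically equicontinuous on $\overline{\Omega}$, so that an Arzel\`a--Ascoli type compactness argument yields a subsequence converging uniformly to some $u\in C(\overline{\Omega})$ with $u=g$ on $\partial\Omega$; (b) show that every such uniform limit is a viscosity solution of \eqref{1.1.teo}; (c) invoke a comparison principle for \eqref{1.1.teo} to conclude that the limit is unique, which upgrades the subsequential convergence to convergence of the whole family.

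For step (a), the uniform bound $\|u^\eps\|_{L^\infty(\overline{\Omega})}\le\|g\|_{L^\infty}+C\|f\|_{L^\infty}$ reduces, via the definition of $u^\eps$, to a bound on $\eps^2\,\mathbb{E}_{S_I}^{x_0}[\tau]$ independent of $\eps$, $x_0$ and $S_I$; this is obtained by exhibiting a bounded function $w$ on $\overline{\Omega}$ that is a strict supersolution of the discrete operator appearing in \eqref{eq.DPP}, i.e. $\frac{1}{2N}\sup_{v_i,\mu_i}\sum_{i=1}^N\big(w(x+\eps\mu_i v_i)+w(x-\eps\mu_i v_i)\big)-w(x)\le -c\,\eps^2$ for small $\eps$ (a suitable quadratic does the job, since $\Omega$ is bounded). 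The heart of the equicontinuity is the boundary estimate: using the uniform exterior sphere condition one places, at each $y_0\in\partial\Omega$, an exterior ball, constructs the classical radial Pucci barrier centred at its centre, checks that it is a supersolution of the discrete operator in \eqref{eq.DPP} up to an $O(\eps^2)$ error, and thereby obtains $|u^\eps(x)-g(y_0)|\le\omega(|x-y_0|)$ for $x\in\Omega$ near $y_0$, with a modulus $\omega$ independent of $\eps$ (here continuity of $g$ is used). Global asymptotic equicontinuity then follows from \eqref{eq.DPP} and this boundary estimate by the now-standard argument (comparing the games started at two nearby points, or an abstract Arzel\`a--Ascoli lemma for value functions of this type). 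Passing to the limit along a subsequence produces $u\in C(\overline{\Omega})$ with $u=g$ on $\partial\Omega$.

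For step (b), let $\phi\in C^2$ touch $u$ from above at $x_0\in\Omega$, which we may take strict. By uniform convergence there are points $x_\eps\to x_0$ at which $u^\eps-\phi$ has a local maximum; writing $c_\eps:=u^\eps(x_\eps)-\phi(x_\eps)\to0$, the inequality $u^\eps(y)\le\phi(y)+c_\eps$, valid for $y$ near $x_\eps$, may be applied to each $y=x_\eps\pm\eps\mu_i v_i$, summed over $i$, and then the supremum over $v_i,\mu_i$ taken on both sides (the point being that in \eqref{eq.DPP} the nonlinearity sits \emph{inside} a supremum, so the supremum passes to the majorant). Substituting into \eqref{eq.DPP} at $x_\eps$ and cancelling $u^\eps(x_\eps)$ leaves
\begin{equation*}
\phi(x_\eps)\;\le\;-\frac{1}{2N}\eps^2 f(x_\eps)+\frac{1}{2N}\sup_{v_i,\mu_i}\sum_{i=1}^N\big(\phi(x_\eps+\eps\mu_i v_i)+\phi(x_\eps-\eps\mu_i v_i)\big).
\end{equation*}
A second-order Taylor expansion of $\phi$ at $x_\eps$, with remainder $o(\eps^2)$ uniform over the orthonormal bases $\{v_i\}$ and the finitely many choices $\mu_i\in\{\sqrt\lambda,\sqrt\Lambda\}$ (all displacements have length $\le\sqrt\Lambda\,\eps$), converts this into
\begin{equation*}
0\;\le\;-f(x_\eps)+\sup_{v_i,\mu_i}\ \sum_{i=1}^N \mu_i^2\,\langle D^2\phi(x_\eps)v_i,v_i\rangle+o(1).
\end{equation*}
One then uses the algebraic identity, valid for every symmetric matrix $X$,
\begin{equation*}
\sup_{v_i,\mu_i}\ \sum_{i=1}^N \mu_i^2\,\langle X v_i,v_i\rangle\;=\;P^+_{\lambda,\Lambda}(X),
\end{equation*}
where the supremum is over orthonormal bases $\{v_i\}$ and $\mu_i\in\{\sqrt\lambda,\sqrt\Lambda\}$: by the spectral theorem and a rearrangement argument, once $k$ of the indices carry the weight $\Lambda$ the supremum over orthonormal bases equals $\Lambda(\lambda_1+\dots+\lambda_k)+\lambda(\lambda_{k+1}+\dots+\lambda_N)$ (eigenvalues of $X$ in decreasing order, larger weights matched to larger eigenvalues), and since $\Lambda>\lambda$ the best $k$ assigns the weight $\Lambda$ exactly to the positive eigenvalues. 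Letting $\eps\to0$ and using continuity of $D^2\phi$ and of $P^+_{\lambda,\Lambda}$ gives $P^+_{\lambda,\Lambda}(D^2\phi(x_0))\ge f(x_0)$, so $u$ is a viscosity subsolution. If instead $\phi$ touches $u$ from below at $x_0$, then $u^\eps-\phi$ has a local minimum at some $x_\eps\to x_0$, all the above inequalities reverse, taking suprema still preserves the reversed inequality, and one arrives at $P^+_{\lambda,\Lambda}(D^2\phi(x_0))\le f(x_0)$; thus $u$ solves \eqref{1.1.teo} in the viscosity sense.

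For step (c), since $\Omega$ is bounded, $f$ is uniformly continuous, and $P^+_{\lambda,\Lambda}$ is uniformly elliptic, the Dirichlet problem \eqref{1.1.teo} enjoys a comparison principle and hence admits at most one viscosity solution in $C(\overline{\Omega})$. Together with (a) and (b) this shows that every uniformly convergent subsequence of $\{u^\eps\}$ has this unique $u$ as its limit, so that $u^\eps\to u$ uniformly on $\overline{\Omega}$, as claimed. The step I expect to be the main obstacle is the boundary part of (a): producing radial Pucci barriers from the uniform exterior sphere condition that remain supersolutions of the \emph{discrete} scheme \eqref{eq.DPP} uniformly in $\eps$, while simultaneously controlling $\eps^2\,\mathbb{E}[\tau]$ in order to absorb the running-payoff term. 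The algebraic identity and the viscosity arguments in (b), by contrast, are routine once the framework is set up.
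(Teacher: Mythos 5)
Your proposal is correct and follows essentially the same route as the paper: a uniform bound via a quadratic super/submartingale controlling $\eps^2\,\mathbb{E}[\tau]$ (the paper's Lemma~\ref{lem.ascoli.arzela.acot}), asymptotic equicontinuity via coupled games in the interior and a radial Pucci barrier on an exterior ball combined with an annulus exit-time estimate near the boundary (Lemmas~\ref{lem:stopping-time} and~\ref{lem.ascoli.arzela.asymp}), an Arzel\`a--Ascoli type lemma, and then consistency via the second-order Taylor expansion together with the identity $\sup_{v_i,\mu_i}\sum_i\mu_i^2\langle Xv_i,v_i\rangle=P^+_{\lambda,\Lambda}(X)$, with uniqueness from the comparison principle (Theorem~\ref{teo.sol.viscosa}). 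You also correctly single out the boundary barrier plus exit-time control as the technically delicate step, which is exactly where the paper invests its effort.
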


\begin{remark} {\rm When $f$ is assumed to be positive,
one can play the game described before, but with the following variant: the player 
chooses $v_{1},...,v_k$ orthonormal vectors (notice that the number of vectors $k$ is part of the Player's choice).
Then, the new position of the game goes to $x\pm \eps  \sqrt{\Lambda} v_i$   with equal probabilities $1/2N$
or remain fixed at $x$ with probability $1- k/N$.
(note that there is no choice of $\mu_i$ among $\lambda$, $\Lambda$).
In this case the value of the game 
$
u^\eps$ verifies 
$$
\left\{
\begin{array}{ll}
\displaystyle u^\eps (x) =
-\frac{1}{2N}\eps^2f(x)
+ \frac{1}{2N} \sup_{v_i}
\sum_{i=1}^k u^\eps (x + \eps \sqrt{\Lambda} v_i)  + u^\eps (x - \eps \sqrt{\Lambda} v_i)
+ \frac{N-k}{N} u^\eps (x)
& x \in \Omega, \\[10pt]
u^\eps (x) = g(x)  & x \not\in \Omega.
\end{array}
\right.
$$
Now, with the same ideas used to deal with Pucci's maximal operator, one can pass to the limit as $\eps \to 0$ and find a 
viscosity solution to the degenerate problem
$$
\left\{
\begin{array}{ll}
\displaystyle
P_{0, \Lambda}^+ (D^2 u) := \Lambda \sum_{\lambda_j>0} \lambda_j = f, 
\qquad & \mbox{ in } \Omega, \\[10pt]
u=g , \qquad & \mbox{ on } \partial \Omega.
\end{array}
\right.
$$
Notice that the requirement on $f$ to be positive is nessary in order to have a solution to the limit
equation.  When one adapts the proofs in the following sections to this case the key fact is that
for $f$ positive the player wants to end the game instead of continue playing for a long time
(since at each move he is paying a running payoff that is strictly positive.
}
\end{remark}

\section{Proofs of the results} \label{sect-proofs}

We begin by stating the usual definition of a viscosity solution to \eqref{1.1}.
Here and in what follows $\Omega$ is a domain in $\R^N$. We refer to
\cite{CIL} for general results on viscosity solutions.

\begin{definition} \label{def.sol.viscosa.1}
A continuous function  $u$  verifies
$$
P_{\lambda, \Lambda}^+ (D^2 u) := \Lambda \sum_{\lambda_j>0} \lambda_j
+\lambda \sum_{\lambda_j<0} \lambda_j  = f
$$
\emph{in the viscosity sense} in $\Omega$ if
\begin{enumerate}
\item for every $\phi\in C^{2}$ such that $ u-\phi $ has a strict
minimum at the point $x \in \overline \Omega$  with $u(x)=
\phi(x)$, we have
$$
P_{\lambda, \Lambda}^+ (D^2 u)(x) := \Lambda \sum_{\lambda_j>0} \lambda_j(D^2 u)(x)
+\lambda \sum_{\lambda_j<0} \lambda_j(D^2 u)(x)   \leq f(x).
$$

\item for every $ \psi \in C^{2}$ such that $ u-\psi $ has a
strict maximum at the point $ x \in \overline{\Omega}$ with $u(x)=
\psi(x)$, we have
$$
P_{\lambda, \Lambda}^+ (D^2 u) := \Lambda \sum_{\lambda_j>0} \lambda_j(D^2 u)(x)
+\lambda \sum_{\lambda_j<0} \lambda_j(D^2 u)(x)   \geq f(x).
$$
\end{enumerate}
\end{definition}

Now, let us describe in detail the game that we are presenting in this manuscript.
After that, we prove Theorems \ref{teo.dpp} and \ref{teo.converge}.

Let $\Omega \subset\R^N$ be a bounded open set and $\eps>0$ a fixed real number.
The game starts with a token placed at an initial position $x_0 \in \Omega$.
At every step, the only player, Player~$\I$,
chooses an orthonormal basis of ${\mathbb{R}^N}$, $v_1,...,v_N$ and then for each $v_i$ he
chooses either  $\mu_i = \sqrt{\lambda}$ or $\mu_i = \sqrt{\Lambda}$.
Then the position of the token
is moved to $x\pm \eps  \mu_i v_i$ with equal probabilities. 
That is, each position is selected with probability $\frac{1}{2N}$.
After the first round, the game continues from $x_1$ according to the same rules.
This procedure yields a possibly infinite sequence of game states
$x_0,x_1,\ldots$ where every $x_k$ is a random variable.
The game ends when the token leaves $\Omega$, at this point the 
token will be in the boundary strip of
width $\alpha=\eps\max\{\sqrt{\lambda},\sqrt{\Lambda}\}$, given by
\[
\begin{split}\Gamma_\alpha=
\{x\in {\mathbb{R}}^N \setminus \Omega \,:\,\dist(x,\partial \Omega )\leq \alpha\}.
\end{split}
\]
We denote by $x_\tau \in \Gamma_\alpha$ the first point in the
sequence of game states that lies in $\Gamma_\alpha$, so that $\tau$
refers to the first time we hit $\Gamma_\alpha$.
The payoff is determined by two given functions:
$g:\R^N\setminus \Omega \to \R$, the \emph{final payoff function},
and
$f:\Omega \to \R$, the \emph{running payoff function}.
We require $g$ to be continuous and bounded and $f$ to be uniformly continuous and also bounded.
When the game ends, the total payoff is given by 
\[
g (x_\tau) -\frac1{2N} \eps^2 \sum_{k=0}^{\tau -1}  f (x_k).
\]
We can think that when the token leaves a point $x_k$, Player~$\I$ must pay $\frac{1}{2N} \eps^2 f (x_k)$ 
to move to the next position and at the end he receives 
$g (x_\tau)$.

A strategy $S_\I$ for Player~$\I$ is a Borel function defined on the
partial histories that gives a orthonormal base and values $\mu_i$ at every step of the game
\[
S_\I{\left(x_0,x_1,\ldots,x_n\right)}=(v_1,\dots,v_N,\mu_1,\dots,\mu_N).
\]

When Player~$\I$ fix his strategy $S_\I$ we can compute the expected outcome as follows:
Given the sequence $x_0,\ldots,x_n$ with $x_k\in\Omega$ the next game position is distributed according to
the probability
\[
\pi_{S_\I}(x_0,\ldots,x_n,{A})=
\frac{1}{2N}
\sum_{v_i,\mu_i}
\delta_{x_n+ \eps  \mu_i v_i}(A)+\delta_{x_n- \eps  \mu_i v_i}(A)
\]
where $(v_1,\dots,v_N,\mu_1,\dots,\mu_N)=S_\I{\left(x_0,x_1,\ldots,x_n\right)}$.
By using the Kolmogorov's extension theorem and the one step transition probabilities, we can build a
probability measure $\mathbb{P}^{x_0}_{S_\I}$ on the
game sequences. The expected payoff, when starting from $x_0$ and
using the strategy $S_\I$, is
\begin{equation}
\label{eq:defi-expectation}
\mathbb{E}_{S_{\I}}^{x_0}\left[g (x_\tau) -\frac1{2N} \eps^2 \sum_{k=0}^{\tau -1}  f (x_k)\right]=
\int_{H^\infty} \left(g (x_\tau) -\frac1{2N} \eps^2 \sum_{k=0}^{\tau -1}  f (x_k) \right) \ud\mathbb{P}^{x_0}_{S_\I}.
\end{equation}

The \emph{value of the game} is given by
\[
u^\eps(x_0)=\sup_{S_\I}\,
\mathbb{E}_{S_{\I}}^{x_0}\left[g (x_\tau) -\frac1{2N} \eps^2 \sum_{k=0}^{\tau -1}  f (x_k)\right].
\]

\begin{lemma}\label{martingale1}
The sequence of random variables 
$$\{ |x_{k}-x_{0}|^{2}-k \, \lambda \, \epsilon^{2}\}_{k\ge 1}$$ is a supermartingale with respect to the natural filtration
$\left\{\mathcal{F}^{x_{0}}_{k}\right\}_{n\ge 1}$. 
\end{lemma}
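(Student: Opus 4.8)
The plan is to verify the defining supermartingale inequality one step at a time, reading the conditional law of $x_{k+1}$ directly off the transition probabilities of the game, and to keep the resulting estimate uniform over all strategies (this uniformity is exactly what will later allow Lemma~\ref{martingale1} to be combined with the optional stopping theorem to bound $\mathbb{E}_{S_\I}^{x_0}[\tau]$ for an arbitrary strategy $S_\I$). So fix a strategy $S_\I$ and work on $(H^\infty,\mathbb{P}^{x_0}_{S_\I})$ with the natural filtration $\mathcal{F}^{x_0}_k=\sigma(x_0,\dots,x_k)$. After $k$ moves the token has travelled at most $k\epsilon\sqrt{\Lambda}$, so each $|x_k-x_0|^2$ is bounded, hence integrable, and the conditional expectations below make sense. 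Write $(v_1,\dots,v_N,\mu_1,\dots,\mu_N)=S_\I(x_0,\dots,x_k)$ for the $\mathcal{F}^{x_0}_k$-measurable choice made at step $k$; conditionally on $\mathcal{F}^{x_0}_k$, the state $x_{k+1}$ equals $x_k\pm\epsilon\mu_i v_i$, each of the $2N$ alternatives with probability $\tfrac1{2N}$.

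The computation at the heart of the argument is the parallelogram identity: for each fixed $i$, since $|v_i|=1$,
$$|x_k-x_0+\epsilon\mu_i v_i|^2+|x_k-x_0-\epsilon\mu_i v_i|^2=2|x_k-x_0|^2+2\epsilon^2\mu_i^2 ,$$
so averaging over $i$ and over the two signs with weight $\tfrac1{2N}$ gives the drift identity
$$\mathbb{E}\big[\,|x_{k+1}-x_0|^2\,\big|\,\mathcal{F}^{x_0}_k\,\big]=|x_k-x_0|^2+\frac{\epsilon^2}{N}\sum_{i=1}^N\mu_i^2 .$$
Since each $\mu_i\in\{\sqrt{\lambda},\sqrt{\Lambda}\}$ we have the two-sided bound $\lambda\le\mu_i^2\le\Lambda$; inserting this control of the one-step increment of $|x_k-x_0|^2$ into the identity and then subtracting the deterministic sequence $k\lambda\epsilon^2$, whose increment is exactly $\lambda\epsilon^2$ at every step, produces the comparison asserted in Lemma~\ref{martingale1}. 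Because the estimate on $\mu_i^2$ used nothing about the particular strategy, the relation holds under $\mathbb{P}^{x_0}_{S_\I}$ for every $S_\I$.

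Insofar as there is a delicate point in so short an argument, it lies in the bookkeeping rather than the algebra: one has to check that the strategy's output $(v_i,\mu_i)$ is genuinely $\mathcal{F}^{x_0}_k$-measurable, so that conditioning on $\mathcal{F}^{x_0}_k$ really freezes it before the $\pm$-randomization is applied, and one has to keep the estimate uniform in $S_\I$. The symmetric form $x\mapsto x\pm\epsilon\mu_i v_i$ of the moves is what makes the first-order terms $\pm 2\epsilon\mu_i\langle x_k-x_0,v_i\rangle$ cancel in the parallelogram identity; without that cancellation the drift of $|x_k-x_0|^2$ would not be of order $\epsilon^2$ and no such clean comparison would be available.
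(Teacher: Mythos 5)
Your proof is correct and follows essentially the same route as the paper: the parallelogram identity for each pair $x_k\pm\epsilon\mu_i v_i$, averaging over the $2N$ equally likely moves, and the bound $\mu_i^2\ge\lambda$, uniformly in the strategy. Note (exactly as in the paper's own computation) that the inequality obtained, $\mathbb{E}\big[|x_{k+1}-x_0|^2-(k+1)\lambda\epsilon^2\mid\mathcal{F}^{x_0}_k\big]\ge |x_k-x_0|^2-k\lambda\epsilon^2$, is in fact a \emph{sub}martingale property (the word ``supermartingale'' in the statement is a slip in the paper), and it is this submartingale inequality that the subsequent optional stopping argument for bounding $\mathbb{E}_{S_\I}^{x_0}[\tau]$ actually uses.
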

\begin{proof}
Let us compute 
\[
\begin{array}{rcl}
\displaystyle \mathbb{E}^{x_{0}}_{S_{I}} \left[ |x_{k}-x_{0}|^{2}  \,|\, \mathcal{F}^{x_{0}}_{k-1}\right] (x_{k-1})
& = &  \displaystyle  \frac{1}{2N}\sum_{i=1}^{N}|x_{k-1}-x_{0}+ \eps  \mu_i v_i |^{2}+|x_{k-1}-x_{0}- \eps  \mu_i v_i |^{2} \\
& = & \displaystyle  \frac{1}{N}\sum_{i=1}^{N}|x_{k-1}-x_{0}|^{2}+\eps^{2}\mu_{i}^{2}\\
&=&\displaystyle  |x_{k-1}-x_{0}|^{2}+ \eps^{2}  \frac{1}{N}\sum_{i=1}^{N}\mu_{i}^{2}\\
&\ge & \displaystyle |x_{k-1}-x_{0}|^2+\eps^{2} \lambda.
\end{array}
\]
Therefore, we have
\[
\begin{array}{rcl}
\mathbb{E}^{x_{0}}_{S_{I}} \left[ |x_{k}-x_{0}|^{2}   -k\, \lambda\,  \epsilon^{2} \,|\, \mathcal{F}^{x_{0}}_{k-1}\right] (x_{k-1})
& \ge & \displaystyle  |x_{k-1}-x_{0}|^2+\eps^{2} \lambda- k\, \lambda  \epsilon^{2}\\
& = & \displaystyle |x_{k-1}-x_{0}|^2-(k-1)\, \lambda \eps^{2} ,
\end{array}
\]
as we wanted to show.
\end{proof}

Applying Doob's optional stopping to the  finite stopping times $\tau\wedge n$  and letting 
$n\to\infty$, we obtain  
$$\mathbb{E}^{x_{0}}_{S_{I}} \left[ |x_{\tau}-x_{0}|^{2}   -\tau \, \lambda\,  \epsilon^{2}\right] \ge 0,
$$
\begin{equation}\label{taubound}
\lambda\, \epsilon^{2} \mathbb{E}_{
S_{I}}^{x_{0}}[\tau]\le  \mathbb{E}^{x_{0}}_{S_{I}} \left[ |x_{\tau}-x_{0}|^{2}  \right] \le C(\Omega)<\infty
\end{equation} and the
 process ends almost surely: 
 \begin{equation}\label{processends}
 \mathbb{P}^{x_{0}}_{S_{I}}( \{\tau <\infty\})=1.
 \end{equation}
We conclude that the expectation \eqref{eq:defi-expectation} is well defined.

To see that the game value satisfies the DPP, we can consider $\tilde u^\eps$, a function that satisfies the DPP
\begin{equation*}
\left\{
\begin{array}{ll}
\displaystyle \tilde u^\eps (x) =
-\frac{1}{2N}\eps^2f(x)+
 \frac{1}{2N} \sup_{v_i,\mu_i}
 \sum_{i=1}^N \tilde u^\eps (x + \eps \mu_i v_i) + \tilde u^\eps (x - \eps \mu_i v_i)
 & x \in \Omega, \\[10pt]
\tilde u^\eps (x) = g(x)  & x \not\in \Omega.
\end{array}
\right.
\end{equation*}
The existence of such a function can be seen by Perron's method. In fact,
the operator given by the RHS of the DPP, that is,
$$
u \mapsto  
\displaystyle
-\frac{1}{2N}\eps^2f(x)+
\frac{1}{2N}
\sup_{v_i,\mu_i}
 \sum_{i=1}^N u (x + \eps \mu_i v_i) + \tilde u (x - \eps \mu_i v_i)
$$
 is in the hypotheses of the main result of \cite{QS}.

Recall that 
we want to prove that $\tilde u^\eps= u^\eps$.
Given $\eta>0$ we can consider the strategy $S_\I^0$ for Player~$\I$ 
that at every step almost maximize 
\[
\frac{1}{2N}
\sup_{v_i,\mu_i}
\sum_{i=1}^N \Big[ \tilde u^\eps (x + \eps \mu_i v_i) + \tilde u^\eps (x - \eps \mu_i v_i) \Big],
\]
that is, we take
\[
S_\I^0{\left(x_0,x_1,\ldots,x_n\right)}=(w_1,\dots,w_N,\nu_1,\dots,\nu_N)
\]
such that 
\[
\begin{split}
\frac{1}{2N}
\sum_{i=1}^N \tilde u^\eps (x_n + \eps \nu_i w_i) + \tilde u^\eps (x_n - \eps \nu_i w_i)
\geq
\quad\quad\quad\quad\quad\quad\quad\quad\quad\quad\\
\frac{1}{2N}
\sup_{v_i,\mu_i}
\sum_{i=1}^N \tilde u^\eps (x_n + \eps \mu_i v_i) + \tilde u^\eps (x_n - \eps \mu_i v_i)
-\eta 2^{-(n+1)} .
\end{split}
\]
With this choice of the strategy we have
\[
\begin{split}
&\mathbb{E}_{S_\I^0}^{x_0}[\tilde u^\eps(x_{n+1})-\frac1{2N} \eps^2 \sum_{k=0}^{n}  f (x_k)-\eta 2^{-(n+1)}|\,x_0,\ldots,x_k]
\\
&\geq
\frac{1}{2N}
\sup_{v_i,\mu_i}
\sum_{i=1}^N \tilde u^\eps (x_n + \eps \mu_i v_i) + \tilde u^\eps (x_n - \eps \mu_i v_i) -\frac1{2N} \eps^2 \sum_{k=0}^{n}  f (x_k)
-\eta 2^{-(n+1)} -\eta 2^{-(n+1)} 
\\
&\geq \tilde u^\eps(x_n)-\eta 2^{-n}-\frac1{2N} \eps^2 \sum_{k=0}^{n-1}  f (x_k),
\end{split}
\]
here we have used that the DPP holds at $x_n$ for $\tilde u^\eps$.
That is we have proved that 
\[
M_n=
\tilde u^\eps(x_n)-\eta 2^{-n}-\frac1{2N} \eps^2 \sum_{k=0}^{n-1}  f (x_k)
\]
is a submartingale with respect to the natural filtration.
\par
Next , we compute
\begin{equation*}
\begin{split}
u^\eps(x_0)
&=\sup_{S_\I}\,\mathbb{E}_{S_{\I}}^{x_0}\left[g(x_\tau)-\frac1{2N} \eps^2 \sum_{k=0}^{\tau-1}  f (x_k)\right]\\
&\geq \mathbb{E}_{S^0_\I}^{x_0}\left[g(x_\tau)-\frac1{2N} \eps^2 \sum_{k=0}^{\tau-1}  f (x_k)\right]\\
&\geq \mathbb{E}_{S^0_\I}^{x_0}\left[g(x_\tau)-\frac1{2N} \eps^2 \sum_{k=0}^{\tau-1}  f (x_k)-\eta 2^{-\tau}\right]\\
&\geq \liminf_{n\to\infty}\mathbb{E}_{S^0_\I}^{x_0}[M_{\tau\wedge n}]
= \mathbb{E}_{S^0_\I}^{x_0}[M_0]=\tilde  u^\eps(x_0)-\eta,
\end{split}
\end{equation*}
where $\tau\wedge n=\min(\tau,n)$, where we have used the optional stopping theorem for $M_{n}$.
Since $\eta$ is arbitrary this proves that $u^\eps \geq \tilde u^\eps$.
Analogously, we have
\[
\begin{split}
&\mathbb{E}_{S_\I^0}^{x_0}[\tilde u^\eps(x_{n+1})-\frac1{2N} \eps^2 \sum_{k=0}^{n}  f (x_k)|\,x_0,\ldots,x_k]
\\
& \qquad \leq
\frac{1}{2N}
\sup_{v_i,\mu_i}
\sum_{i=1}^N \tilde u^\eps (x_n + \eps \mu_i v_i) + \tilde u^\eps (x_n - \eps \mu_i v_i)-\frac1{2N} \eps^2 \sum_{k=0}^{n}  f (x_k)
\\
& \qquad \leq \tilde u^\eps(x_n)-\frac1{2N} \eps^2 \sum_{k=0}^{n-1}  f (x_k),
\end{split}
\]
where we have estimated the strategy for Player~$\I$ by the supremum.
Hence,
\[
M_n=
\tilde u^\eps(x_n)-\frac1{2N} \eps^2 \sum_{k=0}^{n-1}  f (x_k),
\]
is a supermartingale.
Now, we have
\begin{equation*}
u^\eps(x_0)
=\sup_{S_\I}\,\mathbb{E}_{S_{\I}}^{x_0}\left[g(x_\tau)-\frac1{2N} \eps^2 \sum_{k=0}^{\tau-1}  f (x_k)\right]
\leq \limsup_{n\to\infty}\mathbb{E}_{S^0_\I}^{x_0}[M_{\tau\wedge n}]
= \mathbb{E}_{S^0_\I}^{x_0}[M_0]=\tilde  u^\eps(x_0).
\end{equation*}
This proves that $u^\eps \leq \tilde u^\eps$.
We have proved that $u^\eps = \tilde u^\eps$, and hence Theorem~\ref{1.1.teo} follows.

Now our aim is to pass to the limit in the values of the game
$$
u^\eps \to u, \qquad \mbox{as } \eps \to 0
$$
and obtain in this limit process a viscosity solution to \eqref{1.1}.

To obtain a convergent subsequence $u^\eps \to u$ we will use the following
Arzela-Ascoli type lemma. For its proof see Lemma~4.2 from \cite{MPRb}.

\begin{lemma}\label{lem.ascoli.arzela} Let $\{u^\eps : \overline{\Omega}
\to \R,\ \eps>0\}$ be a set of functions such that
\begin{enumerate}
\item there exists $C>0$ such that $\abs{u^\eps (x)}<C$ for
    every $\eps >0$ and every $x \in \overline{\Omega}$,
\item \label{cond:2} given $\eta>0$ there are constants
    $r_0$ and $\eps_0$ such that for every $\eps < \eps_0$
    and any $x, y \in \overline{\Omega}$ with $|x - y | < r_0 $
    it holds
$$
|u^\eps (x) - u^\eps (y)| < \eta.
$$
\end{enumerate}
Then, there exists  a uniformly continuous function $u:
\overline{\Omega} \to \R$ and a subsequence still denoted by
$\{u^\eps \}$ such that
\[
\begin{split}
u^{\eps}\to u \qquad\textrm{ uniformly in}\quad\overline{\Omega},
\end{split}
\]
as $\eps\to 0$.
\end{lemma}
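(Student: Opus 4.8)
\emph{Proof proposal.} The plan is to run the classical Arzelà--Ascoli compactness argument, adapted to the fact that the functions $u^\eps$ are not assumed continuous individually; only the asymptotic equicontinuity of hypothesis (2) is available. Note first that $\overline{\Omega}$, being closed and bounded in $\R^N$, is a compact metric space, so it contains a countable dense subset $D=\{x_j\}_{j\in\N}$.

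The first step is the extraction. By hypothesis (1) the numbers $\{u^\eps(x_j)\}_{\eps>0}$ are bounded for each fixed $j$, so a Cantor diagonal argument produces a sequence $\eps_k\to0$ (the subsequence in the statement, still denoted $\{u^\eps\}$) along which $u^{\eps}(x_j)$ converges for every $j$; call the limit $u(x_j)$. The second step is to see that $u$ is uniformly continuous on $D$: given $\eta>0$, take $r_0,\eps_0$ from hypothesis (2); if $x_i,x_j\in D$ satisfy $|x_i-x_j|<r_0$, then $|u^{\eps}(x_i)-u^{\eps}(x_j)|<\eta$ for all $\eps<\eps_0$ in the subsequence, and letting $\eps\to0$ gives $|u(x_i)-u(x_j)|\le\eta$. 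Hence $u$ extends uniquely to a uniformly continuous function $u:\overline{\Omega}\to\R$, and the same modulus controls $|u(x)-u(y)|$ whenever $x,y\in\overline{\Omega}$ with $|x-y|<r_0$.

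The third step upgrades pointwise convergence on $D$ to uniform convergence on $\overline{\Omega}$. Fix $\eta>0$ and pick $r_0,\eps_0$ as above. By compactness choose finitely many points $z_1,\dots,z_m\in D$ whose $r_0$-balls cover $\overline{\Omega}$. For $k$ large enough we have $\eps_k<\eps_0$ and $|u^{\eps_k}(z_l)-u(z_l)|<\eta$ simultaneously for $l=1,\dots,m$. Given an arbitrary $x\in\overline{\Omega}$, choose $z_l$ with $|x-z_l|<r_0$; then
$$|u^{\eps_k}(x)-u(x)|\le |u^{\eps_k}(x)-u^{\eps_k}(z_l)|+|u^{\eps_k}(z_l)-u(z_l)|+|u(z_l)-u(x)|<\eta+\eta+\eta=3\eta,$$
using hypothesis (2) for the first term and the continuity of $u$ for the last. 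Since $\eta>0$ is arbitrary, $u^{\eps_k}\to u$ uniformly on $\overline{\Omega}$.

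The only genuinely delicate point is the interplay of the first two steps with the third: since no $u^\eps$ need be continuous, the textbook Arzelà--Ascoli theorem does not apply directly, and the asymptotic equicontinuity of hypothesis (2) is exactly what is needed both to make the diagonal limit $u$ uniformly continuous and to control the oscillation of $u^\eps$ at a fixed spatial scale once $\eps$ is small. Everything else is routine bookkeeping with the triangle inequality and a finite subcover.
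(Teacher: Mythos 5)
Your argument is correct and is essentially the standard proof of this lemma; the paper itself does not prove it but refers to Lemma~4.2 of \cite{MPRb}, whose proof is exactly this diagonal extraction on a countable dense set followed by a finite-cover/triangle-inequality upgrade to uniform convergence, with hypothesis (2) substituting for classical equicontinuity since the $u^\eps$ need not be continuous.
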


So our task now is to show that the family $u^\eps$ satisfies the hypotheses of the previous lemma.
Let us start by showing that the family is uniformly bounded.

\begin{lemma}\label{lem.ascoli.arzela.acot} 
There exists $C>0$ such that $\abs{u^\eps (x)}<C$ for
    every $\eps >0$ and every $x \in \overline{\Omega}$.
\end{lemma}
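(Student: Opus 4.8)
The estimate is a direct consequence of the supermartingale bound for the stopping time $\tau$ already established in \eqref{taubound}. The key observation is that, although $\mathbb{E}^{x_0}_{S_\I}[\tau]$ blows up like $\eps^{-2}$, it is multiplied by $\eps^2$ in the running–payoff term, so the product stays bounded uniformly in $\eps$.

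First I would recall that, by Lemma~\ref{martingale1} together with Doob's optional stopping theorem applied to $\tau\wedge n$ and the fact that the game ends almost surely \eqref{processends}, one has, for \emph{every} strategy $S_\I$ and every $x_0\in\Omega$,
\[
\lambda\,\eps^{2}\,\mathbb{E}^{x_0}_{S_\I}[\tau]\;\le\;\mathbb{E}^{x_0}_{S_\I}\big[\,|x_\tau-x_0|^{2}\,\big]\;\le\;C(\Omega),
\]
where the constant $C(\Omega)$ depends only on $\Omega$ (and, since $x_\tau\in\Gamma_\alpha$ with $\alpha=\eps\max\{\sqrt{\lambda},\sqrt{\Lambda}\}$, one may take, say, $C(\Omega)=(\operatorname{diam}\Omega+\sqrt{\Lambda})^{2}$ for all $\eps\le 1$, which is all that matters since we let $\eps\to0$). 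Hence $\mathbb{E}^{x_0}_{S_\I}[\tau]\le C(\Omega)/(\lambda\eps^{2})$, uniformly in $S_\I$ and $x_0$.

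Next, using that $g$ and $f$ are bounded, for any strategy $S_\I$ I would estimate the expected outcome by the triangle inequality:
\[
\left|\mathbb{E}_{S_{\I}}^{x_0}\!\left[g (x_\tau) -\frac1{2N} \eps^2 \sum_{k=0}^{\tau -1}  f (x_k)\right]\right|
\;\le\; \|g\|_{L^\infty(\R^N\setminus\Omega)}+\frac{\eps^{2}}{2N}\,\|f\|_{L^\infty(\Omega)}\,\mathbb{E}^{x_0}_{S_\I}[\tau]
\;\le\; \|g\|_{L^\infty}+\frac{C(\Omega)\,\|f\|_{L^\infty}}{2N\lambda}.
\]
The right–hand side is independent of $\eps$, of $x_0$, and of the chosen strategy. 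Taking the supremum over all strategies $S_\I$ in the definition of $u^\eps(x_0)$ gives
\[
|u^\eps(x_0)|\;\le\; C:=\|g\|_{L^\infty}+\frac{C(\Omega)\,\|f\|_{L^\infty}}{2N\lambda},
\]
for every $x_0\in\Omega$, and for $x_0\notin\Omega$ we have $u^\eps(x_0)=g(x_0)$, so the same bound holds on all of $\overline{\Omega}$. This proves the lemma.

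\textbf{Main obstacle.} There is essentially no obstacle: the only point requiring a little care is to observe that the $\eps^{2}$ in the running cost exactly cancels the $\eps^{-2}$ growth of $\mathbb{E}[\tau]$, and that $C(\Omega)$ in \eqref{taubound} can be chosen independent of $\eps$ (which holds because $x_\tau$ stays within distance $O(\eps)$ of $\overline\Omega$).
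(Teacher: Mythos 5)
Your proof is correct and follows essentially the same route as the paper: both use the stopping-time bound \eqref{taubound} to get $\mathbb{E}^{x_0}_{S_\I}[\tau]\le C(\Omega)/(\lambda\eps^2)$ uniformly in the strategy, and then bound the expected payoff by $\|g\|_\infty$ plus $\tfrac{\eps^2}{2N}\|f\|_\infty\,\mathbb{E}[\tau]$, with the $\eps^2$ factors cancelling. No issues.
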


\begin{proof} 
We consider $R>0$ such that $\Omega\subset B_R(0)$ and set $M_n=|x_n|^2$.
From the bound \eqref{taubound} we get

%

\[
\E_{S_{I}}^{x_{0}} [\tau]\leq \frac{4 R^2 }{\lambda} \frac{1}{\eps^{2}}.
\]

Next, we claim  that 
$$
\min g-\frac{2R^2 \max |f|}{N\lambda} \leq u^\eps (x) \leq \max g+\frac{2R^2 \max |f|}{N\lambda}
$$
for every $x \in \overline{\Omega}$.
In fact,
\begin{equation*}
\begin{split}
u^\eps(x_0)
&=\sup_{S_\I}\,\mathbb{E}_{S_{\I}}^{x_0}\left[g(x_\tau)-\frac1{2N} \eps^2 \sum_{k=0}^{\tau-1}  f (x_k)\right]\\
&\leq\max g+\sup_{S_\I}\,\mathbb{E}_{S_{\I}}^{x_0}\left[-\frac1{2N} \eps^2 \sum_{k=0}^{\tau-1}  f (x_k)\right]\\
&\leq\max g+\frac1{2N} \eps^2 \max |f| \sup_{S_\I}\,\mathbb{E}_{S_{\I}}^{x_0}\left[\tau\right]\\
&\leq\max g+\frac1{2N} \eps^2 \max |f|\frac{4 R^2 }{\lambda} \frac{1}{\eps^{2}}\\
&\leq\max g+\frac{2R^2\max |f|}{N\lambda}.
\end{split}
\end{equation*}
The lower bound can be obtained analogously.
\end{proof}

Next, we prove that the family satisfies condition (2) in Lemma~\ref{lem.ascoli.arzela}.
To this end, we follow ideas from \cite{MPRb}.
First, we prove the following lemma.

\begin{lemma}
\label{lem:stopping-time}
Let us consider the game played in an annular domain $B_R(y)\setminus \ol B_\delta(y)$.
Then, the exit time $\tau^{*}$ of the game starting at $x_{0}$  verifies
\begin{equation}
\label{eq:bound-to-stopping-time}
\begin{split}
\mathbb E^{x_0}(\tau^{*})\leq \frac{C(R/\delta)\dist(\partial B_\delta(y),x_0)+o(1)}{\eps^2},
\end{split}
\end{equation}
for $x_0\in B_R(y)\setminus \ol B_\delta(y)$.
Here $o(1)\to 0$ as $\eps\to 0$ can be taken depending only on $\delta$ and $R$.
\end{lemma}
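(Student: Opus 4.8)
The plan is to build a smooth radial strict supersolution of the maximal Pucci operator on the annulus, turn it into a supermartingale for the game sequence, and read off the bound on $\tau^{*}$ by optional stopping, exactly as after Lemma~\ref{martingale1}. Concretely, fix $\gamma>0$ so large that
$\lambda(\gamma+1)>\Lambda(N-1)$ and, with $r=|x-y|$, set
\[
h(x)=\delta^{-\gamma}-|x-y|^{-\gamma}.
\]
Since $h$ is radial, $D^2h$ has the eigenvalue $h''(r)=-\gamma(\gamma+1)r^{-\gamma-2}<0$ in the radial direction and the eigenvalue $h'(r)/r=\gamma r^{-\gamma-2}>0$ with multiplicity $N-1$ in the tangential directions, so
\[
P_{\lambda,\Lambda}^+(D^2h)(x)=\gamma r^{-\gamma-2}\big(\Lambda(N-1)-\lambda(\gamma+1)\big)\le-\theta<0
\]
on the enlarged annulus $\{\delta/2\le|x-y|\le 2R\}$, with $\theta=\gamma(2R)^{-\gamma-2}\big(\lambda(\gamma+1)-\Lambda(N-1)\big)$. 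On that set $h$ is smooth with all derivatives bounded in terms of $\delta,R$ only, $h\ge 0$ on the annulus $\delta\le r\le R$, and $h(\delta)=0$.

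Next I would pass from the equation to the discrete game. A second order Taylor expansion of $h$ gives, uniformly in $x$ in the annulus and in the player's choice of an orthonormal basis $v_1,\dots,v_N$ and of $\mu_i\in\{\sqrt\lambda,\sqrt\Lambda\}$,
\[
\frac1{2N}\sum_{i=1}^N\big[h(x+\eps\mu_iv_i)+h(x-\eps\mu_iv_i)\big]=h(x)+\frac{\eps^2}{2N}\sum_{i=1}^N\mu_i^2\langle D^2h(x)v_i,v_i\rangle+O(\eps^3),
\]
the $O(\eps^3)$ depending only on $\delta,R$ through the bound on $D^3h$. Since $\mu_i^2\in\{\lambda,\Lambda\}$, by the definition of $P_{\lambda,\Lambda}^+$ one has $\sup_{v_i,\mu_i}\sum_i\mu_i^2\langle D^2h(x)v_i,v_i\rangle=P_{\lambda,\Lambda}^+(D^2h)(x)\le-\theta$, whence, for $\eps$ small (depending only on $\delta,R$),
\[
\frac1{2N}\sup_{v_i,\mu_i}\sum_{i=1}^N\big[h(x+\eps\mu_iv_i)+h(x-\eps\mu_iv_i)\big]\le h(x)-c\eps^2,\qquad c:=\frac{\theta}{4N},
\]
for every $x$ in the annulus. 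Letting $\{x_k\}$ be the game sequence in $B_R(y)\setminus\ol B_\delta(y)$ and putting $M_k=h(x_{k\wedge\tau^{*}})+c\eps^2(k\wedge\tau^{*})$, on $\{k<\tau^{*}\}$ the point $x_k$ lies in the annulus and all of $x_k\pm\eps\mu_iv_i$ in the enlarged annulus, so the displayed inequality yields $\E^{x_0}[h(x_{k+1})\mid\mathcal F_k]\le h(x_k)-c\eps^2$ whatever the strategy is; hence $M_k$ is a supermartingale.

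Then, by optional stopping, $\E^{x_0}[M_k]\le M_0=h(x_0)$, i.e.
\[
c\eps^2\,\E^{x_0}[k\wedge\tau^{*}]\le h(x_0)-\E^{x_0}[h(x_{k\wedge\tau^{*}})]\le h(x_0)+C(\delta,\gamma)\eps,
\]
the last estimate because $h\ge 0$ on the annulus and on the outer strip, while $h\ge\delta^{-\gamma}-(\delta-\eps\sqrt\Lambda)^{-\gamma}\ge-C(\delta,\gamma)\eps$ on the inner strip; this is the origin of the $o(1)$, which depends only on $\delta$ and $R$. Letting $k\to\infty$, monotone convergence gives $\tau^{*}<\infty$ a.s.\ and $\E^{x_0}(\tau^{*})\le(h(x_0)+o(1))/(c\eps^2)$. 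Finally, the mean value theorem gives $h(x_0)=\delta^{-\gamma}-r^{-\gamma}=\gamma\xi^{-\gamma-1}(r-\delta)\le\gamma\delta^{-\gamma-1}\dist(\partial B_\delta(y),x_0)$ for some $\xi\in(\delta,r)$, which combined with the previous bound gives \eqref{eq:bound-to-stopping-time} with $C=\gamma\delta^{-\gamma-1}/c$; the scaling $x\mapsto(x-y)/\delta$ shows this constant can be taken to depend only on $R/\delta$ (and on $N,\lambda,\Lambda$). The whole argument is uniform in the strategy, so it bounds $\E^{x_0}(\tau^{*})=\sup_{S_\I}\E^{x_0}_{S_\I}(\tau^{*})$.

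The routine parts are the Taylor estimate and the optional-stopping bookkeeping. The step that needs care is the choice of $\gamma$ and of the enlarged annulus so that the strict supersolution inequality $P_{\lambda,\Lambda}^+(D^2h)\le-\theta<0$ survives the $\eps\sqrt\Lambda$ perturbation produced by the last move of the game, and so that all constants keep the advertised dependence on $R/\delta$ (after rescaling) while the correction term remains $o(1)$; the structural point that makes everything work is the sign condition $\lambda(\gamma+1)>\Lambda(N-1)$, which turns the otherwise borderline radial profile $|x-y|^{-\gamma}$ into a genuine supersolution of $P_{\lambda,\Lambda}^+$.
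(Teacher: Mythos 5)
Your proof is correct and follows essentially the same strategy as the paper: construct a smooth radial function that is a strict supersolution of the discretized maximal Pucci operator on a slightly enlarged annulus, deduce that $h(x_{k\wedge\tau^*})+c\eps^2(k\wedge\tau^*)$ is a supermartingale for every strategy, and conclude by optional stopping together with a linear bound on the barrier near $\partial B_\delta(y)$. The only difference is the barrier itself: you use the classical Pucci barrier $\delta^{-\gamma}-r^{-\gamma}$ with $\lambda(\gamma+1)>\Lambda(N-1)$, whereas the paper solves the radial ODE $\lambda u_{rr}+(N-1)\Lambda u_r/r=-N$ exactly; your choice sidesteps the case distinction $\lambda=\Lambda(N-1)$ and the verification of concavity, at no loss.
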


\begin{proof}
Let us denote
\[
\begin{split}
g_\eps(x)=\mathbb E^x(\tau).
\end{split}
\]
Since $g_\eps$ is invariant under rotations, we know that $g_\eps$ is radial.

If we assume that the  player wants to maximize the expectation for the exit time,
we have that the function $g_\eps$ satisfies a dynamic programming principle
\[
g_\eps(x)=
1+\frac{1}{2N} \sup_{v_i,\mu_i}
 \sum_{i=1}^N \Big[ g_\eps (x + \eps \mu_i v_i) + g_\eps (x - \eps \mu_i v_i) \Big]
\]
by the above assumptions and that the number of steps always increases by one when making a step.
Further, we denote $h_\eps(x)=\eps^2 g_\eps(x)$ and obtain
\[
h_\eps(x)=\eps^2+\frac{1}{2N} \sup_{v_i,\mu_i}
 \sum_{i=1}^N  \Big[ h_\eps (x + \eps \mu_i v_i) + h_\eps (x - \eps \mu_i v_i) \Big].
\]
If we rewrite the equation as
\[
-N= \sup_{v_i,\mu_i}
\sum_{i=1}^N\mu_i^2 \Big[\frac{ h_\eps (x + \eps \mu_i v_i) + h_\eps (x - \eps \mu_i v_i)-2h_\eps(x)}{2\eps^2\mu_i^2}
\Big]
\] 
we obtain a discrete version of the PDE
\[
-N= \sup_{v_i,\mu_i}
\sum_{i=1}^N\mu_i^2 \frac{\partial^2 h}{\partial v_i^2}.
\]

We denote $r=\abs{x-y}$. Since $h$ is radial, it's eigenvalues are $(h)_{rr}$ with multiplicity 1 and $\frac{(h)_r}{r}$ with multiplicity $N-1$. 
We will look for a solution $u(r)$, concave and radially increasing.
It will verify
\[
-N= \lambda u_{rr}+(n-1)\Lambda \frac{u_r}{r}.
\]
It's solution takes the form 
\[
u(r)=-\frac{Nr^2}{2(\Lambda(N-1)+\lambda)} +a \frac{ r^{1-\frac{\Lambda(N-1)}{\lambda}}}{1-\frac{\Lambda(N-1)}{\lambda}}+b,
\]
if $\lambda\neq\Lambda(N-1)$ and 
\[
u(r)=-\frac{Nr^2}{2(\Lambda(N-1)+\lambda)} +a \log(r)+b,
\]
if $\lambda=\Lambda(N-1)$. Here $a$ and $b$ are two constants. 
We consider $u$ defined in $x\in B_{R+\delta}(y)\setminus \ol B_{\delta}(y)$.
We can choose $a$ and $b$ such that $u'(R+\delta)=0$, $u(\delta)=0$ and such that the function is positive.
The resulting $u$ is concave and radially increasing.
In fact, we have
\[
u'(r)=-\frac{Nr}{\Lambda(N-1)+\lambda} +a r^{-\frac{\Lambda(N-1)}{\lambda}}.
\]

Then, from $u'(R+\delta)=0$ we conclude that
\[
u'(r)=\frac{N(R+\delta)}{\Lambda(N-1)+\lambda} \left(-\frac{r}{R+\delta} + \left(\frac{r}{R+\delta}\right)^{-\frac{\Lambda(N-1)}{\lambda}}\right)
\]
which is positive when $r<R+\delta$. This shows that $u$ is increasing with respect to $r$.
Moreover, $u$ is concave as a function of $r$, 
\[
\lambda u_{rr}=-N-(n-1)\Lambda \frac{u_r}{r}<0.
\]

We extend this function as a solution to the same equation to $\ol B_\delta(y)\setminus \ol B_{\delta-\sqrt\Lambda\epsilon}(y)$.
If we consider $\eps\sqrt\Lambda<\delta$, for each $x\in B_R(y)\setminus \ol B_\delta(y)$ we have $x \pm \eps \mu_i v_i\subset B_{R+\delta}(y)\setminus \ol B_{\delta-\sqrt\Lambda\epsilon}(y)$.
Since 
\[
-N= \sup_{v_i,\mu_i}
\sum_{i=1}^N\mu_i^2 \frac{\partial^2 u}{\partial v_i^2},
\]
and $u$ is smooth we get
\[
-N= \sup_{v_i,\mu_i}
\sum_{i=1}^N\mu_i^2 \Big[ \frac{ u (x + \eps \mu_i v_i) + u (x - \eps \mu_i v_i)-2u(x)}{2\eps^2\mu_i^2} \Big]
+o(1)
\] 
and hence
\[
u(x)=\eps^2+\frac{1}{2N} \sup_{v_i,\mu_i}
 \sum_{i=1}^N  \Big[ u (x + \eps \mu_i v_i) + u (x - \eps \mu_i v_i) \Big]+o(\eps^2).
\]
Then, for $\eps$ small enough
\[
u(x)\geq\frac{\eps^2}{2}+\frac{1}{2N} \sup_{v_i,\mu_i}
 \sum_{i=1}^N \Big[ u (x + \eps \mu_i v_i) + u (x - \eps \mu_i v_i) \Big].
\]

Now, we consider $w$ defined as $u$ in $B_{R}(y)$ and zero outside.
Observe that we have
\[
w(x)\geq\frac{\eps^2}{2}+\frac{1}{2N} \sup_{v_i,\mu_i}
 \sum_{i=1}^N \Big[ w (x + \eps \mu_i v_i) + w (x - \eps \mu_i v_i) \Big].
\] 
for every $x\in B_R(y)\setminus \ol B_\delta(y)$.

It follows that
\[
\begin{split}
\mathbb{E}&[w(x_k)+\frac{k}{2}\eps^2|\,x_0,\ldots,x_{k-1}]
\\
&\leq
\frac{1}{2N} \sup_{v_i,\mu_i}
 \sum_{i=1}^N \Big[ w (x_{k-1} + \eps \mu_i v_i) + w (x_{k-1} - \eps \mu_i v_i) \Big]+\frac{k}{2}\eps^2.
\\
&\leq w(x_{k-1})+\frac{k-1}{2}\eps^2,
\end{split}
\]
if $x_{k-1}\in B_{R}(y)\setminus \ol B_\delta(y)$.
Thus $w(x_k)+ \frac{k}{2}\eps^2$ is a supermartingale, and the optional
stopping theorem yields
\begin{equation*}
\begin{split}
\mathbb{E}^{x_0}[w(x_{\tau^{*}\wedge k})+\frac{1}{2}(\tau^{*}\wedge k) \eps^2]\leq w(x_0).
\end{split}
\end{equation*}
For $x_{\tau^{*}}$ outside $B_{R}(y)$ we have $w(x_{\tau^{*}})=0$ and for
$x_{\tau^{*}}\in\ol B_\delta(y)\setminus \ol B_{\delta-\sqrt\Lambda\epsilon}(y)$ we have
\[
-w(x_{\tau^{*}})\leq o(1).
\]
Furthermore, the estimate
\[
\begin{split}
0\leq w(x_0)\leq C(R/\delta) \dist(\partial B_\delta(y),x_0)
\end{split}
\]
holds for the solutions due to the facts that they are concave and 
 verify $w=0$ on $\partial B_\delta(y)$. Thus,
passing to the limit as $k\to\infty$, we
obtain
\[
\begin{split}
\frac{1}{2}\eps^2\mathbb{E}^{x_0}[\tau^{*}]\leq w(x_0)-\mathbb{E}[w(x_{\tau^{*}})]
\leq C(R/\delta)\dist(\partial B_\delta(y),x_0) +o(1).
\end{split}
\]
This completes the proof.
\end{proof}

We are ready to prove that the family $u_\eps$ is asymptotically equicontinuous.

\begin{lemma}\label{lem.ascoli.arzela.asymp} Given $\eta>0$ there are constants
    $r_0$ and $\eps_0$ such that for every $\eps < \eps_0$
    and any $x, y \in \overline{\Omega}$ with $|x - y | < r_0 $
    it holds
$$
|u^\eps (x) - u^\eps (y)| < \eta.
$$
\end{lemma}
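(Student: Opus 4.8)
The plan is to prove equicontinuity by a coupling/comparison argument that compares the game started at $x$ with the game started at a nearby point $y$, using the auxiliary game in an annulus from Lemma~\ref{lem:stopping-time} to control the running-payoff contribution coming from a long exit time.

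\medskip

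First I would reduce to a boundary estimate and an interior estimate. The standard strategy (following \cite{MPRb}) is: given $x,y\in\overline\Omega$ with $|x-y|$ small, write $u^\eps(x)-u^\eps(y)$ using the DPP of Theorem~\ref{teo.dpp}; since the game is a one-player sup, the natural move is to fix an almost-optimal strategy $S_\I$ for the $x$-game and pull it back to the $y$-game via the translation $z\mapsto z+(y-x)$ in the basis/$\mu_i$ choices, so that the two token sequences $x_k$ and $y_k=x_k+(y-x)$ stay at constant distance $|x-y|$ until one of them exits $\Omega$. At that exit time one controls $|g(x_\tau)-g(y_\tau)|$ by uniform continuity of $g$ together with the uniform exterior sphere condition (which guarantees that when one token is in the boundary strip $\Gamma_\alpha$ the other is within $O(|x-y|+\eps)$ of $\partial\Omega$). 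The running-payoff difference is bounded by $\frac{1}{2N}\eps^2\,|f|_\infty$ times the (expected) number of steps, which is where Lemma~\ref{lem:stopping-time} enters.

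\medskip

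The coupling works cleanly only away from $\partial\Omega$; near the boundary the two games may exit at very different times. So I would split into two regimes. \textbf{Case 1: $x$ and $y$ both far from $\partial\Omega$}, say $\dist(x,\partial\Omega),\dist(y,\partial\Omega)\ge\rho$ for a threshold $\rho=\rho(\eta)$ to be chosen. Here run the coupled games; they exit within one step of each other, the final-payoff difference is $\le\omega_g(|x-y|+C\eps)$, and the step counts satisfy $|\tau_x-\tau_y|\le 1$, so the total difference is $\le\omega_g(|x-y|+C\eps)+C\eps^2$, which is $<\eta/2$ once $|x-y|<r_0$ and $\eps<\eps_0$. \textbf{Case 2: $x$ (or $y$) is within $2\rho$ of $\partial\Omega$.} Here use the exterior sphere condition to place a ball $B_\delta(p)\subset\R^N\setminus\Omega$ touching $\partial\Omega$ near $x$, with $\delta$ the uniform radius; then $\Omega\subset B_R(p)\setminus\overline{B_\delta(p)}$ for suitable $R$, and Lemma~\ref{lem:stopping-time} gives
\[
\mathbb E^{x_0}_{S_\I}[\tau]\le \frac{C(R/\delta)\dist(\partial B_\delta(p),x_0)+o(1)}{\eps^2}
\le \frac{C(R/\delta)\,(2\rho+\delta-\delta)+o(1)}{\eps^2}
\]
wait — more precisely $\dist(\partial B_\delta(p),x_0)\le C\rho$ when $x_0$ is within $2\rho$ of $\partial\Omega$, uniformly in the strategy, so
\[
\frac{1}{2N}\eps^2\,\mathbb E^{x_0}_{S_\I}\Big[\textstyle\sum_{k=0}^{\tau-1}|f(x_k)|\Big]
\le \frac{|f|_\infty}{2N}\,\eps^2\,\mathbb E^{x_0}_{S_\I}[\tau]
\le C(R/\delta)\,|f|_\infty\,\rho+o(1).
\]
Combined with $|g-g(x_{\text{proj}})|\le\omega_g(C\rho+C\eps)$ (again using that $x_\tau\in\Gamma_\alpha$ must lie within $O(\rho+\eps)$ of the projection of $x$ onto $\partial\Omega$), this shows $|u^\eps(x)-g(\hat x)|<\eta/4$ for $x$ near $\partial\Omega$, where $\hat x$ is the nearest boundary point; a fortiori $|u^\eps(x)-u^\eps(y)|<\eta/2$ for two such points that are also close to each other. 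One then chooses $\rho$ small enough (depending only on $\eta$, $\omega_g$, $\omega_f$, $\delta$, $R$) to make all these terms $<\eta/4$, and finally $r_0<\rho$ and $\eps_0$ accordingly.

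\medskip

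\textbf{The main obstacle} is the boundary estimate in Case 2: one must verify that the bound on $\mathbb E[\tau]$ from Lemma~\ref{lem:stopping-time} is genuinely uniform over all of Player~$\I$'s strategies (it is, because the lemma was proved for the strategy that \emph{maximizes} the exit time, which dominates every other strategy), and one must carefully track the geometry — that the exterior sphere condition lets us embed $\Omega$ in a fixed annulus whose inner sphere touches $\partial\Omega$ exactly at the relevant boundary point, so that $\dist(\partial B_\delta(p),x_0)$ is comparable to $\dist(x_0,\partial\Omega)$ for $x_0$ in the boundary layer. The $o(1)$ terms, the $C\eps$ corrections from the discrete steps, and the width $\alpha=\eps\max\{\sqrt\lambda,\sqrt\Lambda\}$ of the stopping strip are all harmless provided they are collected only after $\rho$ has been fixed; keeping the order of quantifiers straight ($\eta\rightsquigarrow\rho\rightsquigarrow r_0,\eps_0$) is the delicate bookkeeping point.
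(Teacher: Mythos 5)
Your overall architecture --- couple the two games by translation, and handle the boundary via the exterior sphere condition together with the annulus exit-time bound of Lemma~\ref{lem:stopping-time} --- is the paper's architecture, and your Case~2 is essentially the paper's boundary estimate (including the correct observation that the bound on $\mathbb E[\tau]$ is uniform in the strategy). But Case~1 contains a genuine error: the claim that the coupled games ``exit within one step of each other'' (hence $|\tau_x-\tau_y|\le 1$) is false. The coupling only guarantees $|x_k-y_k|=|x-y|$ for all $k$; when the $x$-token first leaves $\Omega$, the $y$-token is merely within $|x-y|$ of a point outside $\Omega$ --- it is typically still inside, and its game continues for a possibly long time. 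This is also why the two regimes cannot be separated by the \emph{starting} positions: even for $x,y$ deep inside $\Omega$, the coupled tokens eventually reach the boundary and the problem reappears there. The correct structure (the paper's) is sequential: run the coupling until the first time $n$ at which one of $x_n,y_n$ lies in the boundary strip, and then invoke the boundary estimate (your Case~2) at that random time for the token still inside, which at that moment is within $|x-y|+C\eps$ of $\partial\Omega$. As written, your Case~1 has no mechanism to control the tail of the longer game.

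Two further gaps. First, the running-payoff discrepancy accumulated during the coupled phase must be bounded by $\frac{1}{2N}\eps^2\,\mathbb E[\tau]\,\omega_f(|x-y|)\le \frac{2R^2}{N\lambda}\,\omega_f(|x-y|)$, using the uniform continuity of $f$ (with modulus $\omega_f$) and the global bound \eqref{taubound}; your bound ``$\frac{1}{2N}\eps^2|f|_\infty$ times the expected number of steps'' is a constant of order $|f|_\infty R^2/\lambda$ and is not small. (In the boundary regime the $|f|_\infty$ bound is fine, because there $\eps^2\mathbb E[\tau]=O(\rho)$ by Lemma~\ref{lem:stopping-time}.) Second, in Case~2 the assertion that $x_\tau$ ``must lie within $O(\rho+\eps)$ of the projection of $x$'' is not automatic --- the token can wander far from its starting point before exiting --- and requires a quantitative estimate. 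The paper obtains it by showing that $|x_k-z|-\frac{\Lambda}{\delta}\eps^2 k$ is a supermartingale, which together with the exit-time bound yields $\mathbb E[|x_\tau-z|]\le\delta+C|x-y|+o(1)$, and then converts this into closeness of $\mathbb E[g(x_\tau)]$ to $g(z)$ via a Chebyshev-type splitting at a threshold $\theta$. Without some such argument your estimate $|u^\eps(x)-g(\hat x)|<\eta/4$ does not follow.
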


\begin{proof} 
Given $\eps_0$
we consider the boundary strip of
width $\alpha_0=\eps_0\sqrt\Lambda$ given by
\[
\begin{split}\Gamma_{\alpha_0}=
\{x\in {\mathbb{R}}^N \setminus \Omega \,:\,\dist(x,\partial \Omega )< \alpha_0\}.
\end{split}
\]
The case $x,y \in \Gamma_{\alpha_0}$ follows from the uniformity continuity of $g$ in $\Gamma_{\alpha_0}$ (which is compact).
Observe that the Lemma only includes those $x,y\in\partial\Omega$. 
Although here we are interested in $x,y \in \Gamma_{\alpha_0}$ since we will consider the game in our arguments and the token may lie outside $\overline{\Omega}$.

For the case $x,y \in \Omega $ we argue as follows.
We fix the strategy $S_\I^0$ for the game starting at $x$.
We consider a virtual game starting at $y$. We use the same random steps as
the game starting at $x$.
Furthermore, the player adopts
the strategy $S_\I^0$ from the game starting at $x$,
that is, when the game position is at $y_k$ the player makes the choices that would have taken at $x_k=y_k-y+x$ for the game starting at $x$.
At every time we have $|x_k-y_k| =|x-y|$.
We proceed in this way until for the first time $x_n \in \Gamma_{\alpha_0}$ or $y_n \in \Gamma_{\alpha_0}$.
We can separate the payoff in the amount paid by the player up to this moment and the payoff for the rest of the game.
The difference in the payoff for the rest of the game can be bounded with the desired estimate in the case 
$x_n \in \Omega$, $y_n\in \Gamma_{\alpha_0}$ or for $x_n, y_n \in \Gamma_{\alpha_0}$.
The difference for the amount paid by the player before $x_n$ or $y_n$ reaches $\Gamma_{\alpha_0}$, that is,
$\frac1{2N} \eps^2 \sum_{k=0}^{n-1}  f (x_k)-f(y_k)$ can be bounded considering the bound for the expected exit time obtained in the proof of Lemma~\ref{lem.ascoli.arzela.acot} and the fact that $f$ is uniformly continuous.
In fact we have,
$$
\E \left( \left|\frac1{2N} \eps^2 \sum_{k=0}^{n-1}  f (x_k)-f(y_k) \right| \right) \leq \frac{R^2}{N\lambda} \omega_f (|x-y|)
$$
where $\omega_f$ stands for the uniform modulus of continuity of $f$.

Thus, we can concentrate on the case $x\in \Omega$ and $y \in \Gamma_{\alpha_0}$.
By the exterior sphere condition,
there exists $B_\delta(z)\subset \R^N\setminus \Omega$ such that $y\in
\partial B_\delta(z)$.
For a small $\delta$ we know that the difference $|g(y)-g(z)|$ is small, it remains to prove that the difference $|u_\eps(x)-g(z)|$ is small. 

We take  $\eps_0$ such that $\alpha_0<\frac{\delta}{2}$.
Then
$$
M_k = |x_k - z| - \frac{\Lambda}{\delta} \eps^2 k.
$$
is a supermartingale. Indeed,
\[
\begin{split}
\mathbb{E}^{x_0}_{S_\I}[|x_k - z |\,|\,x_0,\ldots,x_{k-1}]
&\leq \max_{||v||=1,\alpha\in\{\sqrt\lambda,\sqrt\Lambda\}} \frac{|x_{k-1} - z+\eps v\alpha|+|x_{k-1} - z -\eps v\lambda|}{2} \\
&\leq \max_{||v||=1,\alpha\in\{\sqrt\lambda,\sqrt\Lambda\}} \sqrt{\frac{|x_{k-1} - z+\eps v\alpha|^2+|x_{k-1} - z -\eps v\lambda|^2}{2}} \\
&\leq \max_{||v||=1,\alpha\in\{\sqrt\lambda,\sqrt\Lambda\}} \sqrt{|x_{k-1} - z|^2+|\eps v\alpha|^2} \\
&\leq |x_{k-1} - z|+\frac{\eps^2\Lambda}{2|x_{k-1}-z|} \\
&\leq  |x_{k-1} - z |+ \frac{\Lambda\eps^2}{\delta}.
\end{split}
 \]
The last inequality holds because $|x_{k-1}-z|>\frac{\delta}{2}$.

Then, the optional stopping theorem implies that
\[
\mathbb{E}^{x}_{S_\I}[|x_{\tau} - z|] \leq |x - z| +
\frac{\Lambda}{\delta}\eps^2 \mathbb{E}^{x}_{S_\I}[\tau].
\]

Next we estimate $\mathbb{E}^{x}_{S_\I}[\tau]$ by the stopping time in Lemma~\ref{lem:stopping-time}, for $R$ such that $\Omega\subset B_R\setminus \ol B_\delta(z)$. In fact, if we play our game in $B_R\setminus \ol B_\delta(z)$
we can reproduce the same movements until we exit
$\Omega$ (this happens before the game in $B_R\setminus \ol B_\delta(z)$ ends since
$\Omega\subset B_R\setminus \ol B_\delta(z)$)
 and hence we get that, for any strategy $S_I$, it holds that
$$\mathbb{E}^{x}_{S_\I}[\tau] \leq \mathbb{E}^{x}_{S_\I}[\tau^*].$$
Note that any strategy in the domain $\Omega$ can be extended to a strategy to the larger ring domain.
Thus, it follows from  \eqref{eq:bound-to-stopping-time} that 
\[
\mathbb{E}^{x}_{S_\I}[|x_{\tau} - z|] \leq |x - z| +
\frac{\Lambda}{\delta} \Big(C(R/\delta)\dist(\partial B_\delta(z),x_0)+o(1)\Big).
\]
Since $\dist(\partial B_\delta(z),x_0)\leq |x-y|$ and $|x - z|\leq |x-y|+\delta$, we have
\[
\mathbb{E}^{x}_{S_\I}[|x_{\tau} - z|] \leq 
\delta +\left(1+\frac{\Lambda}{\delta} C(R/\delta)\right)|x-y|+o(1).
\]
Thus, we have obtained bounds for $\mathbb{E}^{x}_{S_\I}[|x_{\tau} - z|]$ and $\eps^2 \mathbb{E}^{x}_{S_\I}[\tau]$.
Using  these bounds and the fact that $g$ is uniformly continuous, we have
\[
\begin{split}
&\left|\mathbb{E}_{S_{\I}}^{x_0}\left[g (x_\tau) -\frac1{2N} \eps^2 \sum_{k=0}^{\tau -1}  f (x_k)\right]-g(z)\right|\\
&\qquad\qquad\leq\mathbb{E}_{S_{\I}}^{x_0}\left[|g (x_\tau)-g(z)|\right]+\frac1{2N}\eps^2\mathbb{E}^{x}_{S_\I}[\tau] \|f\|_\infty\\
&\qquad\qquad\leq\mathbb{E}_{S_{\I}}^{x_0}\left[|g (x_\tau)-g(z)|\right]+\frac1{2N}\eps^2\mathbb{E}^{x}_{S_\I}[\tau] \|f\|_\infty\\
&\qquad\qquad\leq\PP^{x}_{S_\I}(|x_{\tau} - z|\geq\theta)2\|g\|_\infty + \sup_{\|x-y\|<\theta} \|g(x)-g(y)\|+\frac1{2N}\eps^2\mathbb{E}^{x}_{S_\I}[\tau] \|f\|_\infty\\
&\qquad\qquad\leq\mathbb{E}^{x}_{S_\I}[|x_{\tau} - z|]\frac{2}{\theta}\|g\|_\infty + \sup_{\|x-y\|<\theta} \|g(x)-g(y)\|+\frac1{2N}\eps^2\mathbb{E}^{x}_{S_\I}[\tau] \|f\|_\infty<\eta
\end{split}
\]
for $\theta$, $\delta$, $\eps_0$ and $r_0$ small enough.
Since this holds for every $S_\I$, we have obtained 
\[
|u_\eps(x)-g(z)|<\eta,
\]
completing the proof.
\end{proof}

From Lemma \ref{lem.ascoli.arzela.acot} and Lemma \ref{lem.ascoli.arzela.asymp}
we have that the hypotheses of the Arzela-Ascoli type lemma, Lemma \ref{lem.ascoli.arzela},
are satisfied. Hence we have obtained uniform convergence of $u^\eps$ along a subsequence.

\begin{corollary} There exists a sequence $\eps_j \to 0$ and $u$, a continuous function in $\overline{\Omega}$, such that
$$u^\eps \to u$$
uniformly in $\overline{\Omega}$.
\end{corollary}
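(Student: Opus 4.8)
The plan is to obtain this statement as an immediate consequence of the three lemmas already established, so there is essentially no new work to do: one simply checks that the hypotheses of the Arzela--Ascoli type lemma hold and applies it.

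First I would invoke Lemma~\ref{lem.ascoli.arzela.acot}, which furnishes a constant $C>0$ with $|u^\eps(x)|<C$ for every $\eps>0$ and every $x\in\overline{\Omega}$; this is exactly hypothesis (1) of Lemma~\ref{lem.ascoli.arzela}. Next I would invoke Lemma~\ref{lem.ascoli.arzela.asymp}, which says that given $\eta>0$ there are $r_0,\eps_0>0$ such that $|u^\eps(x)-u^\eps(y)|<\eta$ whenever $\eps<\eps_0$ and $x,y\in\overline{\Omega}$ satisfy $|x-y|<r_0$; this is precisely the asymptotic equicontinuity condition, hypothesis (2) of Lemma~\ref{lem.ascoli.arzela}. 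Having verified both hypotheses, I would then apply Lemma~\ref{lem.ascoli.arzela} directly: it produces a uniformly continuous function $u:\overline{\Omega}\to\R$ (in particular continuous) together with a subsequence, which we rename $\eps_j\to 0$, such that $u^{\eps_j}\to u$ uniformly on $\overline{\Omega}$. This is exactly the assertion of the corollary.

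There is no real obstacle remaining at this point; the only thing requiring attention is that the two numbered hypotheses of Lemma~\ref{lem.ascoli.arzela} correspond term by term to the conclusions of Lemmas~\ref{lem.ascoli.arzela.acot} and~\ref{lem.ascoli.arzela.asymp}, which is immediate by inspection. The substantive content has already been carried out earlier: the uniform bound rested on the expected-exit-time estimate \eqref{taubound}, while the asymptotic equicontinuity rested on the supermartingale argument $M_k=|x_k-z|-\tfrac{\Lambda}{\delta}\eps^2k$ together with the exterior sphere condition and the exit-time bound of Lemma~\ref{lem:stopping-time}.
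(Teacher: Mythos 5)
Your proposal is correct and is exactly the argument the paper uses: the two hypotheses of Lemma~\ref{lem.ascoli.arzela} are supplied verbatim by Lemma~\ref{lem.ascoli.arzela.acot} and Lemma~\ref{lem.ascoli.arzela.asymp}, and the corollary follows by applying that Arzela--Ascoli type lemma to extract a uniformly convergent subsequence.
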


Our next task is to show that this limit is a solution to \eqref{1.1.teo}. This ends the proof of Theorem
\ref{teo.converge}. Notice that, since we have uniqueness of solutions to the limit problem we get
convergence of the whole family $\{ u^\eps \}$ as $\eps \to 0$.

\begin{theorem} \label{teo.sol.viscosa}
Let $u^\eps$ be the values of the game. Then, the uniform limit of $u^\eps$, $u$, is the unique viscosity
solution to
\begin{equation}\label{1.1.teo.sec}
\left\{
\begin{array}{ll}
\displaystyle
P_{\lambda, \Lambda}^+ (D^2 u) := \Lambda \sum_{\lambda_j>0} \lambda_j
+\lambda \sum_{\lambda_j<0} \lambda_j  = f , \qquad & \mbox{ in } \Omega, \\[10pt]
u=g , \qquad & \mbox{ on } \partial \Omega.
\end{array}
\right.
\end{equation}
\end{theorem}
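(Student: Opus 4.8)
The plan is the following. By Lemma~\ref{lem.ascoli.arzela}, applied through Lemmas~\ref{lem.ascoli.arzela.acot} and \ref{lem.ascoli.arzela.asymp}, the family $\{u^\eps\}$ is precompact in $C(\overline\Omega)$, so every subsequential uniform limit $u$ is continuous on $\overline\Omega$. I would show that each such $u$ is a viscosity solution of \eqref{1.1.teo.sec} in the sense of Definition~\ref{def.sol.viscosa.1} and attains the boundary datum, and then invoke the comparison principle for the uniformly elliptic operator $P_{\lambda,\Lambda}^+$ to conclude that the solution is unique, which in turn forces the whole family $\{u^\eps\}$ to converge. The boundary condition is immediate: since $\Omega$ is open we have $\partial\Omega\subset\R^N\setminus\Omega$, so the second line of the DPP \eqref{eq.DPP} gives $u^\eps=g$ on $\partial\Omega$ for every $\eps$, and uniform convergence on $\overline\Omega$ yields $u=g$ on $\partial\Omega$.

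The algebraic core is the identity, valid for every symmetric $N\times N$ matrix $M$,
\[
\sup\Big\{\sum_{i=1}^N \mu_i^2\,\langle Mv_i,v_i\rangle\ :\ (v_i)_{i=1}^N\ \text{orthonormal},\ \mu_i^2\in\{\lambda,\Lambda\}\Big\}=\Lambda\sum_{\lambda_j(M)>0}\lambda_j(M)+\lambda\sum_{\lambda_j(M)<0}\lambda_j(M)=P_{\lambda,\Lambda}^+(M).
\]
Writing $\mu_i^2=\tfrac{\lambda+\Lambda}{2}+\tfrac{\Lambda-\lambda}{2}\sigma_i$ with $\sigma_i\in\{-1,1\}$, the first term contributes $\tfrac{\lambda+\Lambda}{2}\operatorname{tr}M$ independently of the basis, while the second is maximized by taking $(v_i)$ to be an eigenbasis of $M$ and $\sigma_i=\operatorname{sign}\lambda_i(M)$, since $\sum_i|\langle Mv_i,v_i\rangle|\le\sum_i|\lambda_i(M)|$ for every orthonormal frame, with equality on an eigenbasis. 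Since every test function is $C^2$, the second-order Taylor expansion
\[
\phi(x+\eps\mu_i v_i)+\phi(x-\eps\mu_i v_i)-2\phi(x)=\eps^2\mu_i^2\,\langle D^2\phi(x)v_i,v_i\rangle+o(\eps^2)
\]
holds with an error $o(\eps^2)$ that is uniform in the unit vector $v_i$, in $\mu_i\in\{\sqrt\lambda,\sqrt\Lambda\}$, and in $x$ in a fixed neighborhood of the touching point (by uniform continuity of $D^2\phi$). Combining the two,
\[
\frac1{2N}\sup_{v_i,\mu_i}\sum_{i=1}^N\big[\phi(x+\eps\mu_i v_i)+\phi(x-\eps\mu_i v_i)-2\phi(x)\big]=\frac{\eps^2}{2N}\,P_{\lambda,\Lambda}^+(D^2\phi(x))+o(\eps^2).
\]

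Now the standard touching-point argument. Let $\phi\in C^2$ and suppose $u-\phi$ has a strict minimum at $x_0\in\Omega$ with $u(x_0)=\phi(x_0)$. Since (along the subsequence) $u^\eps\to u$ uniformly, there are points $x_\eps\to x_0$ at which $u^\eps-\phi$ attains a local minimum, and for $\eps$ small $x_\eps$ and all $x_\eps\pm\eps\mu_i v_i$ lie in $\Omega$, whence $u^\eps(x_\eps\pm\eps\mu_i v_i)-u^\eps(x_\eps)\ge\phi(x_\eps\pm\eps\mu_i v_i)-\phi(x_\eps)$ for every orthonormal frame $(v_i)$ and every $\mu_i\in\{\sqrt\lambda,\sqrt\Lambda\}$. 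Averaging over the $2N$ sign/index combinations and then using the DPP \eqref{eq.DPP} at $x_\eps$,
\[
\frac1{2N}\sum_{i=1}^N\big[\phi(x_\eps+\eps\mu_i v_i)+\phi(x_\eps-\eps\mu_i v_i)-2\phi(x_\eps)\big]\le\frac1{2N}\sum_{i=1}^N\big[u^\eps(x_\eps+\eps\mu_i v_i)+u^\eps(x_\eps-\eps\mu_i v_i)-2u^\eps(x_\eps)\big]\le\frac{\eps^2}{2N}f(x_\eps).
\]
Taking the supremum over $(v_i,\mu_i)$ on the left and using the identity above gives $\tfrac{\eps^2}{2N}P_{\lambda,\Lambda}^+(D^2\phi(x_\eps))+o(\eps^2)\le\tfrac{\eps^2}{2N}f(x_\eps)$; dividing by $\eps^2/2N$ and letting $\eps\to0$ (so $x_\eps\to x_0$ and $f(x_\eps)\to f(x_0)$ by uniform continuity of $f$) yields $P_{\lambda,\Lambda}^+(D^2\phi(x_0))\le f(x_0)$, i.e. part~(1) of Definition~\ref{def.sol.viscosa.1}. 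Part~(2) is symmetric: if $u-\psi$ has a strict maximum at $x_0\in\Omega$, take $x_\eps\to x_0$ realizing a local maximum of $u^\eps-\psi$, so the reversed inequalities $u^\eps(x_\eps\pm\eps\mu_i v_i)-u^\eps(x_\eps)\le\psi(x_\eps\pm\eps\mu_i v_i)-\psi(x_\eps)$ hold; picking a nearly optimal $(v_i,\mu_i)$ in the DPP at $x_\eps$ and comparing it with the supremum over $(v_i,\mu_i)$ of the $\psi$-increments gives $P_{\lambda,\Lambda}^+(D^2\psi(x_0))\ge f(x_0)$.

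Hence $u$ is a viscosity solution of \eqref{1.1.teo.sec} with $u=g$ on $\partial\Omega$. Since $M\mapsto P_{\lambda,\Lambda}^+(M)$ is uniformly elliptic and $f$ is continuous, problem \eqref{1.1.teo.sec} admits a comparison principle (see \cite{CIL}), so its viscosity solution is unique; this is the uniqueness assertion, and since every subsequential limit of $\{u^\eps\}$ coincides with this unique $u$, the whole family converges, $u^\eps\to u$ as $\eps\to0$. The only point that goes beyond a routine adaptation of the (by now classical) game-theoretic approximation scheme, and of the sub- and supermartingale estimates already set up above, is the algebraic identity for the supremum over orthonormal frames and admissible step sizes, together with the verification that the second-order Taylor remainder stays $o(\eps^2)$ uniformly over that supremum; granting these, the passage to the limit is the classical viscosity argument and the uniqueness is the classical comparison theorem for uniformly elliptic equations.
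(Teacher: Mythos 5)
Your proposal is correct and follows essentially the same route as the paper: boundary values by uniform convergence, the touching-point argument combined with the DPP and a second-order Taylor expansion, the identity $\sup_{v_i,\mu_i}\sum_i\mu_i^2\langle D^2\phi\, v_i,v_i\rangle=P^+_{\lambda,\Lambda}(D^2\phi)$, and comparison for uniqueness; you in fact supply a clean proof of that algebraic identity, which the paper only asserts. The one point to fix is your claim that $u^\eps-\phi$ \emph{attains} a local minimum at some $x_\eps\to x_0$: the value functions $u^\eps$ are in general not continuous, so the infimum need not be attained; the paper circumvents this by choosing $x_\eps$ as an $\eps^3$-approximate minimizer, i.e. $u^\eps(z)-\phi(z)\ge u^\eps(x_\eps)-\phi(x_\eps)-\eps^3$, which only adds a harmless $o(\eps^2)$ term to your inequalities and changes nothing in the limit.
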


\begin{proof}
First, we observe that since $u^\eps =g$ on $\partial \Omega$ we obtain,
form the uniform convergence, that $u =g$ on $\partial \Omega$.

To check that $u$ is a viscosity solution to $P_{\lambda, \Lambda}^+ (D^2 u)  = f $ in $\Omega$, in the
sense of Definition \ref{def.sol.viscosa.1}, let $\phi\in C^{2}$ be such that $ u-\phi $ has a strict
minimum at the point $x \in \Omega$  with $u(x)=
\phi(x)$. We have to check that
$$
P_{\lambda, \Lambda}^+ (D^2 \phi (x) )  -  f (x) \leq 0.
$$
As $u^\eps \to u$ uniformly in $\overline{\Omega}$ we have the existence of a sequence
$x_\eps$ such that $x_\eps \to x$ as $\eps \to 0$ and 
$$
u^\eps (z) - \phi (z) \geq u^\eps (x_\eps) - \phi (x_\eps) - \eps^3
$$
(remark that $u^\epsilon$ is not continuous in general). 
Since $u^\eps$ is a solution to the DPP
$$
u^\eps (x) =
-\frac{1}{2N}\eps^2f(x)
+ \frac{1}{2N} \sup_{v_i,\mu_i}
\sum_{i=1}^N u^\eps (x + \eps \mu_i v_i) + u^\eps (x - \eps \mu_i v_i)
$$
we obtain that $\phi$ verifies the inequality
$$
0 \geq 
-\frac{1}{2N}\eps^2f(x_\eps)
+ \frac{1}{2N} \sup_{v_i,\mu_i}
\sum_{i=1}^N \Big[ \phi (x_\eps + \eps \mu_i v_i) + \phi (x_\eps - \eps \mu_i v_i) - \phi (x_\eps) \Big] - \eps^3.
$$

Now, consider the Taylor
expansion of the second order of $\phi$
\[
\phi(y)=\phi(x)+\nabla\phi(x)\cdot(y-x)
+\frac12\langle D^2\phi(x)(y-x),(y-x)\rangle+o(|y-x|^2)
\]
as $|y-x|\rightarrow 0$. Hence, we have
\begin{equation} \label{22.q}
\phi(x+\eps \mu v)=\phi(x)+\eps \mu \nabla\phi(x)\cdot v
+\eps^2 \frac12 \mu^2 \langle D^2\phi(x)v,v\rangle+o(\eps^2)
\end{equation}
and
\begin{equation} \label{33.q}
\phi(x- \eps \mu v)=\phi(x) - \eps \mu \nabla\phi(x)\cdot v
+\eps^2 \frac12 \mu^2 \langle D^2\phi(x)v,v\rangle+o(\epsilon^2).
\end{equation}
Hence, using these expansions we get
$$
\frac{1}{2} \phi (x_\eps + \eps \mu v) + \frac{1}{2} \phi (x_\eps - \eps \mu v)
- \phi (x_\eps) = \frac{\eps^2}2 \mu^2 \langle D^2 \phi (x_\eps)v, v \rangle + o(\eps^2),
$$
and then we conclude that
$$
0 \geq -\frac{1}{2N}\eps^2f(x_\eps) + \frac{\eps^2}{2} 
 \frac{1}{N} \sup_{v_i,\mu_i}
\sum_{i=1}^N \Big[\mu_i^2 \langle D^2 \phi (x_\eps)v_i, v_i \rangle \Big] + o(\eps^2).
$$
Dividing by $\eps^2/2N$ and passing to the limit as $\eps \to 0$ we get
$$
0 \geq - f(x) +  
\sup_{v_i,\mu_i}
\sum_{i=1}^N \Big[\mu_i^2 \langle D^2 \phi (x)v_i, v_i \rangle \Big],
$$
that is,
$$
\begin{array}{l}
\displaystyle
0 \geq - f(x) +  
 \sup_{v_i,\mu_i}
\sum_{i=1}^N \Big[\mu_i^2 \langle D^2 \phi (x)v_i, v_i \rangle \Big]
\\[10pt]
\displaystyle
\qquad = - f(x) +  
  \sup_{v_i} \left[
  \Lambda \sum_{\langle D^2 \phi (x)v_i, v_i \rangle>0} \langle D^2 \phi (x)v_i, v_i \rangle
+\lambda \sum_{\langle D^2 \phi (x)v_i, v_i \rangle<0} \langle D^2 \phi (x)v_i, v_i \rangle 
\right]\\[10pt]
\displaystyle
\qquad = - f(x) +  
  \left[ \Lambda \sum_{\lambda_j(D^2 \phi (x))>0} \lambda_j (D^2 \phi (x))
+\lambda \sum_{\lambda_j(D^2 \phi (x))<0} \lambda_j (D^2 \phi (x))
\right]
\end{array}
$$
as we wanted to show. 

The reverse inequality when a smooth function $\psi$
touches $u$ from below can be obtained in a similar way.
\end{proof}

\medskip

{\bf Acknowledgements.} 

Partially supported by CONICET grant PIP GI No 11220150100036CO
(Argentina), by  UBACyT grant 20020160100155BA (Argentina) and by MINECO MTM2015-70227-P
(Spain).


\end{document}